\newtheorem{theorem}{Theorem}[section]
\newtheorem{lemma}[theorem]{Lemma}
\newtheorem{corollary}[theorem]{Corollary}
\newtheorem{proposition}[theorem]{Proposition}
\theoremstyle{remark}
\newtheorem{remark}[theorem]{\sc Remark}
\newtheorem*{remark*}{Remark}
\theoremstyle{definition}
\newtheorem{definition}[theorem]{Definition}
\newtheorem{example}[theorem]{Example}
\numberwithin{equation}{section}
\newcommand{\e}{\varepsilon}
\newcommand{\Sing}{\mathrm{Sing } }
\newcommand{\bR}{{\mathbb R}}
\newcommand{\bC}{{\mathbb C}}
\newcommand{\bQ}{{\mathbb Q}}
\newcommand{\fin}{\hspace*{\fill}$\Box$}
\begin{document}

\title[Non normally embedded complex spaces]{Testing Lipschitz non normally embedded \\ complex spaces}

\author{\sc Maciej  Denkowski}

\address{Jagiellonian University, Faculty of Mathematics and Computer Science, Institute of Mathematics, \L ojasiewicza 6, 30-348 Krak\'ow, Poland}

\email{maciej.denkowski@uj.edu.pl}

\author{\sc Mihai Tib\u ar}

\address{Math\' ematiques, 
Universit\'e de Lille,  CNRS - Laboratoire Paul Painlev\' e, F-59000 Lille, France}

\email{tibar@math.univ-lille1.fr}
\thanks{The first named author acknowledges the partial support of Polish Ministry of Science and Higher Education grant 1095/MOB/2013/0. Both authors acknowledge the support of the Labex CEMPI (ANR-11-LABX-0007-01) at  the University of Lille.}

\subjclass[2010]{14B05, 32B20, 14J17, 14P15, 32S25}
\keywords{singular complex spaces, metrics, Lipschitz maps}

\begin{abstract}
We introduce a sectional criterion for testing if complex analytic germs $(X,0) \subset (\bC^n, 0)$ are Lipschitz non normally embedded. 
\end{abstract}
\maketitle

\section{Introduction}
A locally closed path-connected set $X\subset{\bR}^n$ may be endowed with two metrics: the Euclidean induced metric (or \emph{outer metric}), which we shall denote by $d(x,y)=||x-y||$, and the \textit{inner} metric $d_X(x,y)$ that is defined as the shortest length of arcs  joining the points  $x, y \in X$. We have $d(x,y)\leq d_X(x,y)$ for all $x,y\in X$. If $X$ is subanalytic or definable in some o-minimal structure (we refer the reader to \cite{DS}, \cite{C} for these notions), then $d_X$ is a well-defined, finite metric. 

  As introduced by  \cite{BM}, one says that $X \subset \bR^n$ is \textit{(Lipschitz) normally embedded}  if $d_X(x,y)\leq Cd(x,y)$, for some constant $C>0$, for all $x,y\in X$.  Many papers have been dedicated to this property ever since, with a raise of interest in the very last years, e.g.  \cite{BFN-normemb, B, FS1, BMe}. In the following we shall abbreviate ``normally embedded'' by NE.
 
The definition of NE amounts to saying that the outer and inner metrics are equivalent, or that the identity mapping $\mathrm{id}:  [X,d]\to [X,d_X]$ is bi-Lipschitz. 
We say that the set $X\subset{\bR}^n$ is normally embedded at $x\in X$, if for some neighborhood $U\ni x$, $U\cap X$ is normally embedded in $U$.  In this paper we focus on complex space germs $(X,0) \subset (\bC^{n}, 0)$.

 If the germ $(X,0)$ is non-singular (more generally, if it is Lipschitz regular, see e.g. \cite[Lemma 2.1]{B}) then it  is NE, but the converse is not true in general.  The simplest example is the curve germ $X: x^{2}- y^{2} =0$ which is singular at 0 but NE.
In case of  curves, it is known that  an irreducible plane or space curve is homeomorphic 
to $\bC$ via a Puiseux parametrization, but this is not necessarily bi-Lipschitz.  

We first present a short elementary proof of the following simple characterization of NE space curve germs, which will be used later: 

\begin{proposition}\label{t:planecurve}
The space curve  $(X,0)\subset ({\bC}^{n},0)$ is normally embedded if and only if all its irreducible components 
are nonsingular, meeting transversally two-by-two at the origin.
\end{proposition}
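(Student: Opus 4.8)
The plan is to decompose $(X,0)=\bigcup_{i=1}^{k}(X_i,0)$ into irreducible components and to fix for each one a Puiseux parametrization $\gamma_i\colon(\bC,0)\to(X_i,0)$, injective near $0$, with $\gamma_i(\tau)=a_i\tau^{m_i}+O(\tau^{m_i+1})$, $a_i\neq0$, $m_i=\mathrm{mult}_0 X_i$; thus $X_i$ is nonsingular at $0$ precisely when $m_i=1$. The structural remark to record first is that on a small enough representative the $X_i$ meet pairwise only at $0$, so any arc joining $X_i\setminus\{0\}$ to $X_j\setminus\{0\}$ with $i\neq j$ passes through $0$; hence $d_X(x,0)=d_{X_i}(x,0)$ for $x\in X_i$ and $d_X(x,y)=d_{X_i}(x,0)+d_{X_j}(0,y)$ for $x\in X_i$, $y\in X_j$, $i\neq j$. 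The two implications are then treated separately.

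For the ``if'' direction I would assume every $X_i$ is nonsingular with pairwise distinct tangent lines $L_i=T_0X_i$. Each $X_i$ is Lipschitz regular, hence NE by \cite[Lemma 2.1]{B}, so $d_{X_i}(x,0)\le C_i\|x\|$ on $X_i$; using the displayed identity, controlling $d_X$ for a pair $x\in X_i$, $y\in X_j$ ($i\neq j$) reduces to the linear inequality $\|x\|+\|y\|\le C\|x-y\|$. To get the latter I would combine nonsingularity of $X_i$, which gives $\|x-\pi_i(x)\|=o(\|x\|)$ and $\|\pi_i(x)\|=(1+o(1))\|x\|$ for the orthogonal projection $\pi_i$ onto $L_i$ (similarly for $y$, $X_j$, $L_j$), with the elementary fact that two distinct complex lines have disjoint unit spheres, which furnishes $c_{ij}>0$ with $\|u-v\|\ge c_{ij}(\|u\|+\|v\|)$ for $u\in L_i$, $v\in L_j$; these estimates combine to the required inequality for $x,y$ near $0$, and one concludes by taking the worst of the finitely many constants.

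For the ``only if'' direction I would argue by contraposition, exhibiting $p_t,q_t\to0$ in $X$ with $d_X(p_t,q_t)/d(p_t,q_t)\to\infty$. If some component $X_1$ is singular, set $m:=m_1\ge2$, $\zeta=e^{2\pi i/m}$, and test $p_t=\gamma_1(t)$, $q_t=\gamma_1(\zeta t)$: these are distinct for $t\neq0$ (a $\zeta$-periodic $\gamma_1$ would factor through $\tau\mapsto\tau^m$, against injectivity), cancellation of the degree-$m$ terms gives $\|p_t-q_t\|=O(|t|^{m+1})$, while any arc in $X$ between $p_t$ and $q_t$ has length at least $d_{X_1}(p_t,q_t)$, which after lifting by $\gamma_1$ is comparable to $\int_\sigma|\tau|^{m-1}\,|d\tau|$ over a path $\sigma$ from $t$ to $\zeta t$, hence $\gtrsim|t|^{m}$. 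If instead all components are nonsingular but $X_1$ and $X_2$ are tangent ($L_1=L_2$), after reparametrizing so that $\gamma_1(t)=vt+O(t^2)$ and $\gamma_2(t)=vt+O(t^2)$ with $v$ spanning $L_1=L_2$, I would test $p_t=\gamma_1(t)$, $q_t=\gamma_2(t)$: then $\|p_t-q_t\|=O(|t|^2)$, whereas $d_X(p_t,q_t)=d_{X_1}(p_t,0)+d_{X_2}(q_t,0)\ge\|p_t\|+\|q_t\|\asymp|t|$. In both cases the ratio blows up, contradicting NE.

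The step I expect to be the main obstacle is the lower bound $d_{X_1}(p_t,q_t)\gtrsim|t|^{m}$ in the singular case, i.e.\ showing one cannot shortcut through $0$; I would derive it from the estimate on $\int_\sigma|\tau|^{m-1}\,|d\tau|$, splitting into the case where $\sigma$ stays in $\{|\tau|\ge|t|/2\}$ (its Euclidean length then dominating $|t-\zeta t|$) and the case where $\sigma$ enters that disk (so it traverses the annulus $|t|/2\le|\tau|\le|t|$). A secondary point needing care is injectivity of the Puiseux parametrization, which both makes the test points distinct and is precisely what forces a singular branch to ``turn'', producing the inner/outer discrepancy.
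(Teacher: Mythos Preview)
Your argument is correct and follows a genuinely different route from the paper's. For the ``only if'' direction the paper chooses a general linear projection $\pi\colon\bC^n\to\bC$ onto the (common) tangent line of the offending sub-curve $X'$ and appeals to two auxiliary lemmas: a covering-space estimate (Lemma~\ref{inner}) giving $d_X(x_1,x_2)\ge 2\,d(\pi(x_i),\Delta_\pi)$ for distinct points in a fiber of $\pi$, and a subanalytic Puiseux lemma (Lemma~\ref{lem1}) giving $\|\gamma_1(s)-\gamma_2(s)\|\le cs^q$ with $q>1$ for two real $\mathcal{C}^1$ arcs sharing the same tangent at $0$; the test points are real lifts of a segment $[0,\varepsilon v]\subset\bC$. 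You instead work directly with the normalization map: the outer bound $\|p_t-q_t\|=O(|t|^{m+1})$ comes from the explicit cancellation $\zeta^m=1$ in the power series, and the inner bound $d_X(p_t,q_t)\gtrsim|t|^m$ comes from pulling arc length back through $\gamma_1$, combined with your structural remark that the branches meet only at $0$ (so $d_X=d_{X_1}$ on $X_1$). The paper also declares the ``if'' direction trivial, while you actually supply the transversality estimate $\|x-y\|\ge c(\|x\|+\|y\|)$ for points on branches with distinct tangent lines. Your approach is more elementary and self-contained; the paper's is chosen because exactly the same projection-plus-covering mechanism (Lemmas~\ref{inner} and~\ref{lem1}) is reused in the proof of the main sectional criterion, Theorem~\ref{t:sectional}, so Proposition~\ref{t:planecurve} doubles as a rehearsal of that method.
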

This result, which seems to be well-known to specialists,  is related to early work by Pham and Teissier \cite{PT}  about space curves and Lipschitz property in purely algebraic terms.    Our proof is analytic.  It shows in particular that, in case of space curves, the property of being NE does not depend on the embedding. 

\smallskip
\begin{remark}\label{t:embedding}
 The independence  of the NE-property of the embedding is actually a general fact, holding for any complex or real analytic space (although we could not trace back any reference for it):
 
 \emph{Let $h: (X,0) \to (Y,0)$ be a bi-holomorphism of analytic space germs, for some $(X,0)\subset (\bC^n,0)$ and $(Y,0)\subset (\bC^m,0)$. Then  $X$ is NE at 0 if and only if $Y$ is NE at 0. }
 
This is an immediate consequence of the following statement which also seems to be folklore:

\emph{Let $h\colon (X,0)\to (Y,0)$ be a bijective definable map germ which is Lipschitz with respect to the outer metrics. Then $h$ is Lipschitz with respect to the inner metrics and with the same Lipschitz constant.}
\end{remark}
\smallskip

In this note we introduce a sectional technique for testing the non-NE property.  The method of slicing by linear subspaces is 
very common in algebraic geometry, and has a long and successful history starting with the Lefschetz pencil technique. It provides valuable information about the space from one slice or from a family of slices. 
Our aim was to bring this tool into the Lipschitz geometry of singular spaces.  The main result of our paper is the following  sectional criterion for non-NE (see Definition \ref{d:admissible} for the admissibility condition):

\begin{theorem}\label{t:sectional}
Let $(X, 0)\subset (\bC^n,0)$ be a reduced singular analytic germ of pure dimension $k$, with $1<k<n$. 
If there is  an admissible $(n-k+1)$-plane  $P$  such that there is a
curve germ $\Gamma \subset P\cap X$  at $0$ which is  non-NE,
then $X$ is non-NE at $0$.
\end{theorem}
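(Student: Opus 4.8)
The plan is to show that the very pair of arcs in $\Gamma$ realising the non-NE of $\Gamma$ also realises the non-NE of $X$; equivalently, that for an admissible $P$ the inner metric of $X$ restricted to $\Gamma$ dominates, up to a constant, the inner metric of $\Gamma$. Given that, Theorem~\ref{t:sectional} is immediate: along the witnessing sequence $x,y\in\Gamma$ one gets $d_X(x,y)/\|x-y\|\geq\kappa\, d_\Gamma(x,y)/\|x-y\|\to\infty$.

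\emph{Quantitative non-NE of $\Gamma$.} By Proposition~\ref{t:planecurve}, $\Gamma$ has a singular branch or two branches meeting non-transversally. Expanding Puiseux parametrisations (of the singular branch, resp. of the two tangent branches), one produces arcs $\gamma_1,\gamma_2\colon(\bC,0)\to(\Gamma,0)$ and a parameter $t\to 0$ with $\|\gamma_i(t)\|\asymp|t|$ and $\|\gamma_1(t)-\gamma_2(t)\|=o(|t|)$, while every arc in $P\cap X$ joining $\gamma_1(t)$ to $\gamma_2(t)$ has length $\geq c_0|t|$ for a fixed $c_0>0$: such an arc either stays on a singular branch, hence runs back towards $0$, or passes through $0$, the only common point of two branches of the curve germ $P\cap X$. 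So it suffices to prove $d_X(\gamma_1(t),\gamma_2(t))\geq c|t|$ for small $|t|$ and some $c>0$.

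\emph{Confining a competing arc, via admissibility.} Suppose an arc $\sigma$ in $X$ joins $\gamma_1(t),\gamma_2(t)$ with $\mathrm{length}(\sigma)<c|t|$, $c$ small. Since the endpoints have norm $\asymp|t|$, $\sigma$ stays in the shell $\{|t|/2\leq\|z\|\leq 2|t|\}$. Let $\pi\colon\bC^n\to\bC^{k-1}$ be the linear projection with $\ker\pi=P$; then $\pi\circ\sigma$ is a loop at $0\in\bC^{k-1}$ of length $<c|t|$, so $\|\pi(\sigma(s))\|<c|t|/2$ for all $s$. Admissibility of $P$ — read as a transversality of $P$ to $X$ at $0$, a condition failing e.g. for $P=T_0X$ when $X$ is smooth — yields a Łojasiewicz inequality with exponent one, $d(z,P\cap X)\leq A\|\pi(z)\|$ for $z\in X$ near $0$; combined with the above, this forces $\sigma$ into a thin tube (radius $<Ac|t|/2$) around $(P\cap X)\cap\{|t|/2\leq\|z\|\leq 2|t|\}$, and, for $c$ small enough, into the component of that tube carrying the branch(es) of $\Gamma$ through which $\gamma_1,\gamma_2$ run. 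Rescaling the shell by $1/|t|$, the set $X/|t|$ converges on the shell (in $\mathcal{C}^1$ on the regular locus) to the tangent cone $C_0(X)$, and admissibility should provide a bi-Lipschitz model of $X$ along $\Gamma\setminus\{0\}$ over $\Gamma$ that is uniform in $t$; transporting $\sigma$ through this model gives an arc in $P\cap X$ joining $\gamma_1(t),\gamma_2(t)$ of length $\leq B\,\mathrm{length}(\sigma)<Bc|t|$ with $B$ independent of $t$, contradicting the previous paragraph once $c<c_0/B$. Hence $d_X(\gamma_1(t),\gamma_2(t))\geq c|t|$ and $X$ is non-NE.

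The main obstacle is the last step — transporting the confined arc back to $\Gamma$ without blowing up its length. Along a singular branch, or along two tangent branches, the thin tube around $\Gamma$ is not a metric product and $\gamma_1(t),\gamma_2(t)$ become mutually $o(|t|)$-close, so a plain nearest-point retraction onto $\Gamma$ only returns a short \emph{loop}, not a short arc between the two points: it forgets that they lie on different branches, or on conjugate sheets of one branch. Overcoming this is exactly where the admissibility hypothesis (Definition~\ref{d:admissible}) must be used in full: it has to supply, uniformly in the scale $|t|$, enough control on how $X$ opens transversally to $P$ along $\Gamma$ to guarantee that passing from one branch (or sheet) to the other inside $X$ costs order $|t|$ rather than order $\|\gamma_1(t)-\gamma_2(t)\|$.
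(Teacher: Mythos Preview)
Your argument has a genuine gap, which you yourself flag in the last paragraph: the transport step is never carried out. You confine a short competing arc $\sigma\subset X$ to a thin tube around $\Gamma$ and then assert that ``admissibility should provide a bi-Lipschitz model of $X$ along $\Gamma\setminus\{0\}$ over $\Gamma$ that is uniform in $t$'', allowing $\sigma$ to be pushed into $P\cap X$ with controlled length. But you do not construct this model, and as you note, a nearest-point retraction collapses the distinction between the two sheets, producing only a short loop. Nothing you wrote excludes the possibility that $X$ contains, at scale $|t|$, a short bridge between the two sheets of $\Gamma$ lying just outside $P$. In addition, your reading of admissibility as ``transversality of $P$ to $X$'' and your use of the projection $\bC^n\to\bC^{k-1}$ with kernel $P$ are not what Definition~\ref{d:admissible} says: an admissible $P$ is $\pi^{-1}(\ell)$ for a \emph{general} projection $\pi\colon\bC^n\to\bC^k$ (so $\ker\pi\cap C_0(X)=\{0\}$, and $\ker\pi\subset P$), with $\ell\not\subset C_0(\Delta_\pi)$ where $\Delta_\pi\subset\bC^k$ is the discriminant of the branched cover $\pi_{|X}$. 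The discriminant never enters your argument, and without it the hypothesis is not really used.

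The paper's proof avoids the transport problem entirely by a one-line covering argument (Lemma~\ref{inner}). The arcs $\gamma_1,\gamma_2$ are chosen so that $\pi(\gamma_1(t))=\pi(\gamma_2(t))=tw\in\ell\subset\bC^k$. Any path in $X$ from $\gamma_1(t)$ to $\gamma_2(t)$ projects under $\pi$ to a loop in $\bC^k$ based at $tw$; since $\pi_{|X}$ is an unramified covering over the ball $B\bigl(tw,d(tw,\Delta_\pi)\bigr)$ and the endpoints lie on different sheets, that loop must exit the ball, hence has length $\geq 2\,d(tw,\Delta_\pi)$, and the same lower bound holds for $d_X(\gamma_1(t),\gamma_2(t))$. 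Admissibility is then used exactly once: $\ell\not\subset C_0(\Delta_\pi)$ forces $\ell\cap\Delta_\pi=\{0\}$ as germs, so $d(tw,\Delta_\pi)\geq rt$ for some $r>0$. Together with $\|\gamma_1(t)-\gamma_2(t)\|\leq ct^q$, $q>1$ (Lemma~\ref{lem1}), this gives the blow-up of the ratio. No tube, no rescaling to the tangent cone, no retraction is needed; the discriminant does all the work that your transport step was supposed to do.
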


This easy-to-check criterion gives immediately many new examples of non-normally embedded varieties.  As its proof looks simple, one might think that the slicing procedure will do even without any condition. 
Our Example \ref{ex:nongen} shows this is not true:    the admissibility assumption (Definition \ref{d:admissible})  is sharp.

 Our result may be compared to the preprint \cite{BMe} by Birbrair and Mendes, uploaded on arXiv a few months after our preprint \cite{DT},  where the authors reduce the  NE property of a real space germ $(X,0)$  to the comparison of the two types of distances along any pair of arcs on the space. Proving that a space $(X,0)$ is NE remains nevertheless a task of high difficulty. Conversely, testing that $(X,0)$ is non-NE by comparing the distances along well chosen pairs of arcs is the method which has been used for finding all the examples in the literature, and it 
 is in this respect that our result contributes with an effective and sharp sectional test in the context of complex analytic space germs.  
 
  We apply our slice criterion to show that certain Brieskorn-type singular hypersurfaces are non-NE (Corollary \ref{Brieskorn}), extending a result from \cite{BFN-normemb}. 
 
\smallskip

We also derive the following observation. Let $C_0(X) = \cup_{i} C_{i}$ be the decomposition of the tangent cone  into irreducible components of dimension $k$ (see \S \ref{s:prelim})
with multiplicities $\ge 1$. 

 \begin{corollary}\label{c:admiss}  
If the tangent cone $C_0(X)$ contains some irreducible component of multiplicity $\ge 2$, then $X$ is non-NE at 0.
 \end{corollary}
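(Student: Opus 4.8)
The plan is to derive Corollary \ref{c:admiss} from Theorem \ref{t:sectional} by slicing $X$ along a generic plane through a generic ray of the multiple tangent-cone component. (When $k=1$ the statement is an immediate consequence of Proposition \ref{t:planecurve}, so assume $k\ge 2$; note also that $k<n$ is forced, since a reduced germ of pure dimension $n$ in $\bC^n$ is smooth and has irreducible tangent cone of multiplicity $1$.) Let $C_{i_0}$ be a component of $C_0(X)$ occurring with multiplicity $m:=m_{i_0}\ge 2$ in the tangent-cone cycle $C_0(X)=\sum_i m_iC_i$. Then the multiplicity of $X$ at $0$ is at least $m\ge 2$, so $(X,0)$ is singular and the standing hypotheses of Theorem \ref{t:sectional} are met. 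I would first fix a generic line $\ell\subset C_{i_0}$ through $0$, ``generic'' meaning that $[\ell]$ is a smooth point of the projectivized cone $\bP(C_{i_0})\subset\bP^{n-1}$ lying on no other component $\bP(C_j)$; then take $P$ to be a generic $(n-k+1)$-plane containing $\ell$. By the genericity built into Definition \ref{d:admissible}, such a $P$ is admissible.

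The core of the argument is to show that the curve germ $\Gamma:=X\cap P$ at $0$ is non-NE. For $P$ generic in the family of $(n-k+1)$-planes containing $\ell$, the section $\Gamma$ is a reduced curve germ meeting $C_0(X)$ properly (in dimension $1$), and by the classical fact that taking the tangent cone commutes with a sufficiently generic linear section one has the equality of cone cycles $C_0(\Gamma)=P\cap C_0(X)$ (this in particular recovers $\mathrm{mult}_0\Gamma=\mathrm{mult}_0X$). I would then compute the multiplicity of the line $\ell$ in $P\cap C_0(X)$: projectively, $\bP(P)$ is a generic $(n-k)$-plane through $[\ell]$, and since $[\ell]$ is a smooth point of $\bP(C_{i_0})$, a subvariety of dimension $k-1$ complementary to $\bP(P)$ in $\bP^{n-1}$, the intersection is transverse at $[\ell]$; hence in the cycle $\bP(P)\cap\bP(C_0(X))=\sum_i m_i\bigl(\bP(P)\cap\bP(C_i)\bigr)$ the point $[\ell]$ appears with multiplicity exactly $m$. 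Passing to cones, the line $\ell$ appears in $C_0(\Gamma)=P\cap C_0(X)$ with multiplicity $m\ge 2$.

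It remains to conclude with Proposition \ref{t:planecurve}. Write $\Gamma=\bigcup_s\Gamma_s$ as the union of its irreducible branches, and let $\ell_s\subset P$ be the tangent line of $\Gamma_s$ at $0$; the multiplicity of $\ell$ in $C_0(\Gamma)$ equals $\sum_{s\,:\,\ell_s=\ell}\mathrm{mult}_0\Gamma_s$, which we have shown is $\ge 2$. Hence either at least two distinct branches of $\Gamma$ share the tangent line $\ell$, so that they fail to meet transversally, or there is a single branch tangent to $\ell$ of multiplicity $\ge 2$, which is therefore singular. In both cases Proposition \ref{t:planecurve} shows $\Gamma$ is non-NE. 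Since $P$ is admissible and $\Gamma\subset X\cap P$ is a non-NE curve germ at $0$, Theorem \ref{t:sectional} gives that $X$ is non-NE at $0$.

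The step I expect to need the most care is the genericity bookkeeping behind the middle paragraph: one must make sure that $P$ can be chosen simultaneously admissible in the sense of Definition \ref{d:admissible}, meeting $C_0(X)$ properly so that the linear-section identity $C_0(X\cap P)=P\cap C_0(X)$ applies, and transverse to $\bP(C_{i_0})$ at $[\ell]$, all under the constraint $\ell\subset P$. This is precisely why $\ell$ is chosen generic to begin with: the family of $(n-k+1)$-planes containing a generic line $\ell$ is contained in none of the relevant thin ``bad'' loci, so its generic member has all the desired properties. The only external input is the classical linear-section identity for tangent cones, which has to be invoked, with a brief justification, in this constrained rather than fully generic setting.
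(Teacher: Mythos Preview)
Your proposal is correct and follows the same route as the paper: slice $X$ by an admissible $(n-k+1)$-plane $P$, invoke the linear-section identity $C_0(X\cap P)=P\cap C_0(X)$ for a generic slice (the paper cites \cite[\S 8.5, Prop.~1]{Ch} for this), observe that the multiple component $C_{i_0}$ contributes a line of multiplicity $\ge 2$ to the tangent cone of the sliced curve, and conclude via Proposition~\ref{t:planecurve} and Theorem~\ref{t:sectional}.

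The one difference worth noting is the order of choices, and it bears directly on the concern you raise at the end. You first fix a generic line $\ell\subset C_{i_0}$ and then look for an admissible $P$ containing it; this is what forces the ``constrained genericity'' bookkeeping you flag, namely checking that the sub-Grassmannian of planes through $\ell$ still meets the admissible locus. The paper instead chooses a general $\pi$ and a general line in the \emph{target} $\bC^k$ freely, so that $P=\pi^{-1}(\ell)$ is admissible by construction, and then simply notes that $P\cap C_{i_0}$ is automatically a multiple line in $C_0(P\cap X)=P\cap C_0(X)$. This sidesteps your bookkeeping entirely: there is no need to pre-select a line inside $C_{i_0}$, since \emph{any} admissible $P$ already cuts $C_{i_0}$ in (at least one) line of multiplicity $\ge 2$. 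Your argument is not wrong, just longer than necessary.
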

 
 This  corollary is related to a recent result by L. Birbrair,  A. Fernandes, L\^e D.T. and  E. Sampaio  \cite[Proposition 3.4]{B}  saying that if the tangent cone of a singular reduced germ $(X, 0)$  is a linear space then X is non-NE at 0. New developments in this directions including a generalisation of Corollary \ref{c:admiss} have been announced in the preprints \cite{FS1, FS2} published shortly after our preprint \cite{DT}.
 
 \begin{example} For  the surface $X: x^{3} + x^{2}y + y^{3}z + z^{5}= 0$, the tangent cone is $C_0(X) : x^{2}(x+y)=0$, thus contains the 
 complex plane $\{ x=0\} \subset \bC^{3}$ with multiplicity 2.  By Corollary \ref{c:admiss}, $X$ is non-NE at 0.
 
 This surface is normal, but if we drop the term $y^{3}z$ from its equation, then the new surface has non-isolated singularity. However, its behaviour is the same, i.e. it is  non-NE at 0, by the same reason.
 \end{example}

\section{Preliminaries}\label{s:prelim}

Let $(X, 0)\subset (\bC^n,0)$ be a reduced analytic germ of pure dimension $k$, with $1<k<n$, such that $\Sing X \not= \emptyset$ as germ at 0.

 Its  {\it (Peano) tangent cone} at $0\in X$ is by definition  (see \cite[8.1, pag. 71]{Ch}):
\[
C_0(X) :=\{v\in{\bR}^n\mid \exists (x_\nu)\subset X, \lambda_\nu>0\colon x_\nu\to 0, \lambda_\nu x_\nu\to v\}.
\]
It is well-known that for complex spaces the tangent cone is a complex cone of pure dimension $k$, see e.g. \cite[8.3 Corollary, pag. 83]{Ch}.


We consider a linear surjection $\pi: \bC^{n}\to \bC^{k}$  such that the restriction $\pi_{X}: (X, 0) \to (\bC^{k}, 0)$
 is \emph{finite}, i.e. that  $X \cap  \ker \pi = \{0\}$ as germ at the origin, where $\dim \ker \pi = n-k$.
It follows that  $\pi_{X}$ is a branched covering of degree $d\ge \deg_0 X \ge 2$, since $X$ is singular at 0 by our assumption. The equality $d = \deg_0 X$ holds whenever ${\pi}$ is ``general'', in the following sense:

\begin{definition}\label{d:generalpi}
We say that  the projection $\pi$ is \emph{general} with respect to $(X,0)$ if $C_0(X)\cap  \ker \pi=\{0\}$.
\end{definition}

The \emph{ramification locus} of $\pi$ is denoted by  $\Delta_\pi\subset \bC^{k}$; it is an analytic set germ at $0\in \bC^{k}$ of dimension $< k$ and, under our assumption $\Sing X \not= \emptyset$, it is non-empty.

Let us denote by $d(y,E)$ the Euclidean distance of a point $y\in \bC^{k}$ to a set $E$. By  $B_{\e}$ we denote as usual an open ball centered at $0$ of radius $\e >0$.  We have:
\begin{lemma}\label{inner}
 For some small enough $\e >0$, let  $x_1, x_{2}\in X\cap B_{\e}$ be  two distinct points 
 such that $\pi(x_{1}) = \pi(x_{2}) = y \not\in \Delta_\pi$. Then
\begin{equation}\label{eq:dist}
d_X(x_1, x_2)\geq 2 d(y, \Delta_\pi).
\end{equation}
\end{lemma}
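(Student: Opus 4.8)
The plan is to exploit the fact that the two preimages $x_1,x_2$ lie over a regular value $y$ of the branched covering $\pi_X$, so they are separated "horizontally" by the obstruction of the ramification locus. Concretely, I would take any arc $\gamma\colon [0,1]\to X$ joining $x_1$ to $x_2$ inside $X\cap B_\e$ and consider its image $\pi\circ\gamma$ in $\bC^k$. Since $x_1\neq x_2$ but $\pi(x_1)=\pi(x_2)=y$, the arc $\gamma$ must leave the sheet of the covering containing $x_1$ and come back, which forces $\pi\circ\gamma$ to pass through a point of $\Delta_\pi$. Indeed, over the open set $\bC^k\setminus\Delta_\pi$ the map $\pi_X$ is a genuine (unramified) covering, so it has the path-lifting property with unique lifts; if $\pi\circ\gamma$ avoided $\Delta_\pi$ entirely, the lift starting at $x_1$ would be unique and hence equal to $\gamma$, but it would also have to end at the same sheet, and one argues this contradicts $\gamma(1)=x_2\neq x_1$ (more carefully: the two points $x_1,x_2$ lie in different connected components of $\pi_X^{-1}(\bC^k\setminus\Delta_\pi)$ restricted to a small neighbourhood, or are distinguished by the monodromy). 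Thus there is $t_0\in(0,1)$ with $\pi(\gamma(t_0))\in\Delta_\pi$.

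Granting this, the length estimate is short. Write $\ell(\gamma)$ for the length of $\gamma$ in $X$; since $\pi$ is linear it is $1$-Lipschitz after rescaling, so $\ell(\pi\circ\gamma)\le \|\pi\|\,\ell(\gamma)$, and in fact after a linear change of coordinates we may assume $\pi$ is an orthogonal projection, so $\|\pi\|=1$ and $\ell(\pi\circ\gamma)\le\ell(\gamma)$. Now the path $\pi\circ\gamma$ goes from $y$ to the point $\pi(\gamma(t_0))\in\Delta_\pi$ and then back to $y$, so its length is at least $d(y,\Delta_\pi)+d(y,\Delta_\pi)=2d(y,\Delta_\pi)$. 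Combining, $\ell(\gamma)\ge 2d(y,\Delta_\pi)$. Taking the infimum over all arcs $\gamma$ joining $x_1$ and $x_2$ in $X$ gives $d_X(x_1,x_2)\ge 2d(y,\Delta_\pi)$, which is \eqref{eq:dist}. (One should also note the elementary point that $d_X$ here is computed with arcs inside $X\cap B_\e$, which is fine because $d_X$ in the germ sense only uses short arcs; alternatively restrict attention to $\e$ small enough that $d(x_1,x_2)$, and hence any competing arc of comparable length, stays in $B_\e$.)

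The main obstacle is the first paragraph — making rigorous the claim that every arc in $X$ from $x_1$ to $x_2$ must project onto a path meeting $\Delta_\pi$. The clean way is the covering-space argument: restrict $\pi_X$ over $\bC^k\setminus\Delta_\pi$, where it is a finite covering map; a path in $X$ avoiding $\pi^{-1}(\Delta_\pi)$ projects to a path in the base avoiding $\Delta_\pi$, and by unique path-lifting such a path is determined by its starting point, so it cannot connect two distinct points of the fibre over $y$. Hence any arc from $x_1$ to $x_2$ in $X$ either meets $\pi^{-1}(\Delta_\pi)$ (so its projection meets $\Delta_\pi$) or, staying in the unramified part, cannot exist — so in all cases the projected path meets $\Delta_\pi$. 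A mild subtlety is that $X\cap B_\e$ need not be connected away from $\pi^{-1}(\Delta_\pi)$ in a way that respects the sheet structure globally, but locally near $0$ this is exactly the structure of a branched covering of a ball, and choosing $\e$ small enough handles it.
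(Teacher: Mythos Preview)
Your length estimate in the second paragraph is fine once the key claim is established, but the key claim itself --- that the projected loop $\pi\circ\gamma$ must meet $\Delta_\pi$ --- is false, and your covering-space argument for it does not work. Unique path-lifting says only that $\gamma$ is determined by $\pi\circ\gamma$ and the initial point $x_1$; it does \emph{not} say that a lift of a loop is a loop. In a branched covering, a loop in the base avoiding $\Delta_\pi$ can perfectly well lift to a path joining two distinct points of a fibre --- this is precisely nontrivial monodromy. For a concrete counterexample take $X=\{w^2=z\}\subset\bC^2$ with $\pi(z,w)=z$, so $\Delta_\pi=\{0\}$; the path $\gamma(t)=(e^{2\pi i t}, e^{\pi i t})$ joins the two points over $y=1$ while $\pi\circ\gamma$ traces the unit circle and never touches $0$.

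The correct (and simpler) argument, which is what the paper does, replaces your false claim by a weaker one that suffices: the projected loop cannot stay inside the open ball $B(y,d(y,\Delta_\pi))$. Over this ball the map $\pi_X$ is an unramified covering of a \emph{contractible} set, hence trivial; a path staying in one sheet cannot connect $x_1$ to $x_2$. So $\pi\circ\gamma$ must exit the ball, which already forces its length to be at least $2d(y,\Delta_\pi)$. The rest of your argument then goes through unchanged.
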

\begin{proof}
Let $B(y, d(y, \Delta_{\pi}))\subset \bC^{k}$ denote the open ball of radius  $d(y, \Delta_{\pi})$ centred at $y$. Let $\gamma\subset X\cap B_{\e}$ be some path joining $x_1$ to $x_2$. Its projection $\pi(\gamma)$ is a loop in $\bC^{k}$ with base point $y$.  
Then the image $\pi(\gamma)$ cannot be contained in $B(y, d(y, \Delta_{\pi}))$ since $\pi$ is an unramified topological covering above the contractible set $B(y, d(y, \Delta_{\pi}))$ and the points $x_1$ to $x_2$ are in two different sheets.

It follows that the length of $\pi(\gamma)$ is at least  twice the distance $d(y,\Delta_\pi)$.
In turn, the inner distance $d_X(x_1, x_2)$ is greater or equal to the length of its projection by $\pi$, thus we get our inequality \eqref{eq:dist}.
\end{proof}

\begin{lemma}\label{lem1}
Let $\gamma_i\colon [0,\varepsilon)\to {\bR}^n$, $i=1,2$ be two subanalytic $\mathcal{C}^1$ distinct curves such that $\gamma_1(0)=\gamma_2(0)=0$ and $\gamma_1'(0)=\gamma_2'(0)=w$. Then there is an exponent $q\in\bQ$ such that $q>1$ and $||\gamma_1(t)-\gamma_2(t)||\leq \mathrm{const.} t^q$, for $0\leq t\ll 1$.
\end{lemma}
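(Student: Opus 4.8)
The plan is to use the subanalytic (or definable) structure to get Puiseux-type expansions of the two curves and then compare them term by term. First I would exploit the fact that each $\gamma_i$ is a subanalytic $\mathcal{C}^1$ arc through the origin with a well-defined tangent direction $w$; after possibly reparametrising by arclength or by a common parameter, each coordinate function of $\gamma_i$ is a subanalytic function of one real variable $t\geq 0$, hence admits a convergent Puiseux expansion in some fractional power $t^{1/N}$, for a common denominator $N$ that works for both curves and all coordinates. Writing $s = t^{1/N}$, each $\gamma_i(t)$ becomes an honest (real) analytic curve in $s$, vanishing at $s=0$, with derivative at $0$ proportional to $w$ (the $\mathcal{C}^1$ hypothesis guarantees the lowest-order term is linear in $t$, i.e. of order $s^N$, with coefficient $w$).

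Next I would compare the two expansions. Since $\gamma_1\neq\gamma_2$ as germs, the difference $\gamma_1 - \gamma_2$, viewed as a function of $s$, is a nonzero real analytic vector-valued function vanishing at $s=0$; let $s^m$ be its lowest order, so that $\|\gamma_1(t)-\gamma_2(t)\| \sim c\, s^m = c\, t^{m/N}$ as $t\to 0^+$, for some constant $c>0$. Because $\gamma_1'(0)=\gamma_2'(0)=w$, the terms of order $s^N$ (i.e. order $t$) in the two expansions coincide, so the leading term of the difference has order strictly greater than $N$, i.e. $m > N$. Setting $q := m/N \in \mathbb{Q}$ we get $q>1$ and $\|\gamma_1(t)-\gamma_2(t)\| \leq \mathrm{const.}\, t^q$ for $0\leq t\ll 1$, which is exactly the claim.

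The step I expect to be the main (though still fairly routine) obstacle is the reparametrisation: one must arrange that \emph{the same} parameter $t$ is used for both curves in a way compatible with the conclusion, and that the Puiseux expansions in a common variable $s=t^{1/N}$ make sense simultaneously for both. If the two curves are already given with a common parameter $t$ (as the statement suggests, since it speaks of $\gamma_1(t)-\gamma_2(t)$), this is immediate; otherwise one invokes the subanalytic curve selection / parametrisation theorem to put each $\gamma_i$ into Puiseux form and takes $N$ to be the least common multiple of the individual denominators. A secondary point to check is that the $\mathcal{C}^1$ condition $\gamma_i'(0)=w$ really forces the order-$t$ coefficient (rather than some intermediate fractional power between $0$ and $1$) to equal $w$; this follows because a subanalytic $\mathcal{C}^1$ arc cannot have a Puiseux expansion whose lowest-order exponent lies strictly between $0$ and $1$ without violating differentiability at $0$, so the lowest exponent is exactly $1$ with coefficient $\gamma_i'(0)=w$. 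Everything after that is elementary comparison of analytic Taylor series in $s$.
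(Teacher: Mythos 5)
Your proposal is correct and follows essentially the same route as the paper: expand both curves in a common Puiseux variable (the paper invokes Bochnak--Risler, Lemme 3), use the $\mathcal{C}^1$ hypothesis to rule out exponents in $(0,1)$ and to force the coefficient of $t$ to be $w$ for both curves, and conclude that the difference is $O(t^{q})$ with rational $q>1$ (the paper writes $q=(p+1)/p$, you write $q=m/N$ with $m>N$). The only nuance is that when $w=0$ (which occurs in the paper's application) the lowest exponent need not be exactly $1$, but your cancellation argument is unaffected.
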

\begin{proof}
The function $f(t)=||\gamma_1(t)-\gamma_2(t)||$ is subanalytic, hence $f(t)=at^q+o(t^q)$ for some rational $q$ and $a>0$  (see e.g. \cite[Lemme 3]{BR}). One has $q>0$ since the curves are equal at the origin. We shall show that actually $q>1$.

It follows from \cite[Lemme 3]{BR} that there is an integer $p>0$ such that the curves $t\mapsto \gamma_1(t^p), \gamma_2(t^p)$ have analytic extensions at 0. This implies that one has Puiseux expansions  with vector coefficients:
$$
\gamma_i(t)=\sum_{\nu=1}^{+\infty} a_{i,\nu} t^{\nu/p},\quad 0\leq t\ll 1, \quad i=1,2.
$$
Since $w=\gamma_i'(0)=\lim_{t\to 0^+} \gamma_i(t)/t$ is well-defined and the same for both curves, it follows that we must actually have
$$
\gamma_i(t)=wt+\sum_{\nu=p+1}^{+\infty} a_{i,\nu} t^{\nu/p},\quad 0\leq t\ll 1,\quad i=1,2.
$$
It follows that $\gamma_1(t)-\gamma_2(t)=ct^{(p+1)/p}+o(t^{(p+1)/p})$ with some $c\in{\bR}^n$, hence $q = (p+1)/p$ satisfies the requirements.
\end{proof}

\section{Space curves}\label{curves}
\subsection{Proof of Proposition \ref{t:planecurve}}
The ``if'' part being trivial,  we focus on the ``only if'' part. 
We consider the germ of a curve $(X,0)\subset {\bC}^{n}$ and its  decomposition into irreducible components
$X=\bigcup X_j$.  
If the union of some components is not NE, then $X$ is not NE. Therefore, suppose that either $0$ is a singular point of some component $X':=X_j$, or there are two distinct nonsingular components $X_i,X_j$ tangent to each other at zero, in which case we denote $X':=X_i\cup X_j$. In both cases $X'$ is singular and $C_0(X')$ is a complex line that, by a linear change of coordinates, 
we may assume to be the first coordinate axis. 

The linear projection $\pi\colon {\bC}^n\to {\bC}$ onto this first coordinate is general with respect to $X'$,  in the sense of Definition \ref{d:generalpi}, hence $\pi_{X'}$ is a covering of degree $d\geq 2$ ramified at the origin. 

For some $v\in {\bC}$, $|v|=1$ and a small $\varepsilon>0$, let $[0,\varepsilon v]:=\{s v\mid s\in [0,\e]\}$
be a real segment. Then  the germ at 0 of the set $X'\cap \pi^{-1}([0,\varepsilon v])$ consists of $d\ge 2$ distinct real semi-analytic curve germs  which are  tangent along a unique semi-line contained in the complex line $C_0(X')$,  which projects by the general $\pi$ to the real semi-line  $\bR_{+}v$, see e.g.  \cite[\S 8.5, Propositions 1 and 2]{Ch}.  


Let us consider real Puiseux parametrizations (cf \cite[Lemma 3]{BR}) of two such real curves by $\eta_j\colon[0,\e]\ni s\mapsto (sv,\gamma_j(s))\in X'\cap\pi^{-1}([0,\e v])$, $j=1,2$, with $\gamma_j$ subanalytic of class $\mathcal{C}^{1}$, such that $\gamma_1(0)=\gamma_2(0)=0$ and $\gamma_1'(0)=\gamma_2'(0)=0$. 

By Lemma \ref{lem1}, there is an exponent $q> 1$ such that for some constant $c>0$ one has:

\[ ||\gamma_1(s)-\gamma_2(s)||\leq c s^q, \quad 0\leq s\ll 1,\] 

whereas by Lemma \ref{inner} one has:
\[d_X(\gamma_1(s),\gamma_2(s))\geq 2 d(s,0) = 2 s,  \quad 0\leq s\ll 1.\]

 Therefore:
\[
\frac{d_X(\gamma_1(s),\gamma_2(s))}{||(\gamma_1(s)-\gamma_2(s)||}\geq \mathrm{const.} s^{1-q},\quad 0<s\ll 1,
\]
where the right-hand side tends to $+\infty$ when $s\to 0^+$.   This shows that $X'$ is non-NE at zero, thus Proposition \ref{t:planecurve} is proved.
\fin

\bigskip
  
 The multiplicity of the irreducible components does not play any role in the NE problem.
  In case of plane curves, if $(X, 0)$ with reduced structure is defined by $f=0$ and $f= f_{d }+ f_{d+1}+ \ldots =0$  is the expansion into homogeneous parts, the NE problem is determined by the initial form $f_{d}$ whose zero set defines the tangent cone: 
  
 \begin{corollary}\label{c:planecurve}
 A reduced plane curve $(X, 0)\subset \bC^{2}$ is NE  if and only if the initial form $f_{d}$ of its minimal defining equation $f$ decomposes into  $d$ distinct linear terms (equivalently,  its tangent cone $C_{0}(X)$ is reduced).
 \fin
 \end{corollary}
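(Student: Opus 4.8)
The plan is to deduce this from Proposition \ref{t:planecurve} together with the classical fact that the tangent cone of a reduced plane curve germ is governed by the initial form $f_d$. First I would recall that if $f = f_d + f_{d+1} + \cdots$ is the expansion into homogeneous parts of the minimal defining equation, then $C_0(X) = \{f_d = 0\} \subset \bC^2$; this is standard (see e.g. \cite[\S 8.5]{Ch}). Since $f_d$ is a homogeneous polynomial in two variables, it factors over $\bC$ as a product of $d$ linear forms, $f_d = \prod_{i=1}^{r} \ell_i^{m_i}$ with $\sum m_i = d$ and the $\ell_i$ pairwise non-proportional. The initial form decomposes into $d$ \emph{distinct} linear factors precisely when $r = d$, i.e. all $m_i = 1$, which is exactly the condition that $f_d$ (equivalently $C_0(X)$) be reduced.

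Next I would translate this into the geometric language of Proposition \ref{t:planecurve}. The branches (irreducible components) $X_1, \dots, X_s$ of $(X,0)$ each have a well-defined tangent line $L_j = C_0(X_j)$, which is one of the lines $\{\ell_i = 0\}$. The multiplicity $m_i$ attached to the linear factor $\ell_i$ equals the sum of the multiplicities $\mathrm{mult}_0 X_j$ over those branches $X_j$ whose tangent line is $\{\ell_i = 0\}$. Therefore $f_d$ has $d$ distinct linear factors if and only if: (a) every branch $X_j$ has multiplicity $1$ at $0$, i.e. is nonsingular; and (b) the tangent lines $L_j$ are pairwise distinct, i.e. distinct branches meet transversally at $0$. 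This is precisely the condition ``all irreducible components are nonsingular, meeting transversally two-by-two at the origin'' from Proposition \ref{t:planecurve} (here, for plane curves in $\bC^2$, two distinct lines through $0$ are automatically transverse).

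Finally, invoking Proposition \ref{t:planecurve} directly gives: $(X,0)$ is NE if and only if conditions (a) and (b) hold, if and only if $f_d$ decomposes into $d$ distinct linear terms, if and only if $C_0(X)$ is reduced. I would also note explicitly, as already remarked in the paragraph preceding the statement, that passing to the reduced structure (i.e. replacing a defining equation $g = f^k \cdot (\text{unit})$ by its squarefree part $f$) does not change $X$ as a set and hence does not affect the NE property, which justifies restricting attention to the minimal defining equation $f$.

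The only mildly delicate point — and the one I would phrase carefully — is the bookkeeping of multiplicities in step two: identifying the exponent $m_i$ of the linear factor $\ell_i$ in $f_d$ with the total multiplicity of the branches tangent to $\{\ell_i=0\}$, so that ``$f_d$ has $d$ distinct roots'' matches ``each branch is smooth and the branches have pairwise distinct tangents.'' This is a standard local computation (compare multiplicities via $\mathrm{mult}_0 X = \sum_j \mathrm{mult}_0 X_j = \deg f_d = d$, and the tangent cone of $X$ is the union, with multiplicities, of the tangent cones of the branches), so no real obstacle arises; the proof is essentially an unwinding of definitions once Proposition \ref{t:planecurve} is in hand.
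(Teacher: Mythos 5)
Your argument is correct and is essentially the paper's intended one: the corollary is stated as an immediate consequence of Proposition \ref{t:planecurve}, via the standard identification $C_0(X)=\{f_d=0\}$ and the observation that $f_d$ having $d$ distinct linear factors is equivalent to all branches being smooth with pairwise distinct (hence transverse) tangent lines. Your explicit multiplicity bookkeeping just spells out what the paper leaves implicit, so there is nothing to add.
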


 \begin{example}\label{e:noncusp}
 $X=\{y^2 + x^4 = 0\}$ is reduced and not irreducible. It is the union of two nonsingular irreducible components with the same tangent cone at 0.  By  Corollary \ref{c:planecurve}, $X$ is non-NE since $C_{0}(X)$ is not reduced. 
\end{example}

\section{A sectional criterion}\label{s:sectional}

Let now $(X, 0)\subset (\bC^n,0)$ be a reduced analytic germ of pure dimension $k$, with $1<k<n$. 
As introduced in \S \ref{s:prelim}, we consider a linear surjective map $\pi: \bC^{n}\to \bC^{k}$ such that it is \emph{general}  with respect to $X$ (cf Definition \ref{d:generalpi}).
The germ of the discriminant  $\Delta_\pi\subset \bC^{k}$  of $\pi_{X}$ is its ramification locus.

\begin{definition}\label{d:admissible}
A line  $\ell \in G_{1}(\bC^{k})$ is \emph{general} in the target of a linear surjection $\pi$ which is general with respect to $X$
 if  $\ell\not \subset C_0( \Delta_\pi)$.
We then say that the $(n-k+1)$-plane  $P =  \pi^{-1}(\ell)$ is \emph{admissible}.
\end{definition}

\subsection{Proof of Theorem \ref{t:sectional}}

By using Proposition \ref{t:planecurve}, we may assume without loss of generality  that there is a curve $\Gamma \subset P\cap X$ which is  singular and such that its tangent cone $C_{0}(\Gamma)\subset C_{0}(X)$ is a line.

Let us consider the restriction  $\pi_{|P} : P\to \ell$.
Then $\pi_{|P}$ is general with respect to $P\cap X$.

Like in the proof of Proposition \ref{t:planecurve},  for some unitary vector $w\in \ell\subset \bC^{k}$ one finds two distinct real curves in  $\Gamma$
with Puiseux parametrizations over the segment 
$[0,\varepsilon w]\subset \ell$, namely $\gamma_{i} : [0,\varepsilon] \to P\subset \bC^{n}$,  with $\gamma_i'(0)=w$, $i=1,2$. 
By  Lemma \ref{lem1} we get:
\[
||\gamma_1(t)-\gamma_2(t)||\leq c t^q,
\] 
for some $q>1$ and small enough $\e$, and some positive constant $c$.

 On the other hand, as  $\pi (\gamma_1(t))=\pi (\gamma_2(t)) = tw$ for any $t\in [0,\varepsilon]$, by Lemma \ref{inner} we get:
\[
d_X(\gamma_1(t),\gamma_2(t))\geq d(tw,  \Delta_\pi).
\]

Since $\ell$ is general, we have $\ell \not\subset C_0(  \Delta_\pi)$, hence we may assume that $\ell \cap \Delta_\pi = \{ 0 \}$ as germ at 0.
Therefore, there is $r>0$ such that 
 the open conical set
\[
V:=\bigcup_{s\in (0,\e)} \mathbb{B}_{sr}(s w)=\{x\in {\bC}^k\mid  ||x-sw||<sr\}
\]
is disjoint with $\Delta_\pi\cup C_0(X)$, where $\mathbb{B}_{sr}(s w)$  denotes the open ball of radius $sr$ centred 
at $sw\in \ell\subset \bC^{k}$. This ensures the inequality $d(tw,  \Delta_\pi)\geq r t$ for any $t\in [0,\varepsilon]$ and small enough $\e$. We finally get:
\[
\frac{d_X(\gamma_1(t), \gamma_2(t))}{||\gamma_1(t))-\gamma_2(t)||}\geq \frac{d(tw,  \Delta_\pi)}{c t^q} \ge \frac{r t}{c t^q} =\mathrm{const.}  t^{1-q},
\]
where the right-hand side tends to $+\infty$ when $t\to 0^+$. This shows that $X$ is non-NE at zero.
\fin

The following example shows that  the assumption ``$\pi$ is general'', i.e. that  $\ker \pi \cap C_0(X) = \{0\}$,  is essential in the above proof.

\begin{example}\label{ex:nongen}
Consider $X\colon x^2+y^2=z^3$ in ${\bC}^3$. We have $C_0(X)=\{(x,y,z)\mid x^2+y^2=0\}$. The projection $\pi(x,y,z)=(x,y)$ restricted to $X$ is finite  but not general, with ramification locus $\Delta_\pi\colon x^2+y^2=0$. For the general $\ell\colon x=0$ in ${\bC}^2$ we obtain that $X\cap P$ is the cusp $y^2=z^3$, hence a non-NE curve at 0.  
However, by applying the proof  of Theorem \ref{t:sectional} to this situation we do not arrive to an inequality that allows us to conclude. The method fails because of the lack of genericity of $\pi$.  

\end{example}

\subsection{Proof of Corollary  \ref{c:admiss}}\ \\
  Let us fix  a multiple irreducible component $C_{i}$ of the tangent cone $C_0(X)$.
  As above, we consider a general linear map  $\pi$, in the sense of Definition \ref{d:generalpi}. 
   Then the slices $X\cap P$ by admissible $(n-k+1)$-planes $P := \pi^{-1}(\ell)$ are all non-NE curves. Indeed,
the general slice $P \cap C_0(X)$ is precisely the tangent cone of the slice $P \cap X$ (see e.g. \cite[8.5, Prop. 1]{Ch}) and, by our hypothesis, this tangent cone contains the multiple complex line $P\cap C_{i}$.  We may therefore apply Theorem \ref{t:sectional} to conclude. 

\fin

\subsection{Brieskorn-type hypersurfaces}\label{brieskorn}

In \cite{BFN-normemb} the authors show that the Pham-Brieskorn singular surface 
$$
\{(x,y,z)\in{\bC}^3\mid x^a+y^b+z^b=0\},\quad 1<a<b,
$$
is non-NE at zero, provided $a$ does not divide $b$. 

This result can be generalized to a class of Pham-Brieskorn hypersurfaces, without any divisibility condition,  by using our sectional criterion: 


\begin{corollary}\label{Brieskorn}
For $n\ge 2$, let 
$$
X=\{x\in{\bC}^n\mid a_1x_1^{k_1}+\ldots+a_nx_n^{k_n}=0\}, \ \ a_i\in \bC^{*}
$$
be a Pham-Brieskorn hypersurface germ such that
 $1<k_1<k_2$ and $k_2\leq\cdots\leq k_n$. 

Then $X$ is not NE at the origin.
\end{corollary}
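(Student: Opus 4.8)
The plan is to reduce Corollary \ref{Brieskorn} to Theorem \ref{t:sectional} by finding a single admissible $2$-plane whose intersection with $X$ contains a non-NE curve germ. Since $X$ is a hypersurface of dimension $k=n-1$, an $(n-k+1)$-plane is a $2$-plane $P=\pi^{-1}(\ell)$, where $\pi:\bC^n\to\bC^{n-1}$ is a general linear projection and $\ell\subset\bC^{n-1}$ is a general line. First I would compute the tangent cone of $X$ at $0$: since $1<k_1<k_2\le\cdots\le k_n$, the lowest-degree part of the defining polynomial is $a_1x_1^{k_1}$ (of degree $k_1$), so $C_0(X)=\{x_1^{k_1}=0\}=\{x_1=0\}$, which is a hyperplane with multiplicity $k_1\ge 2$. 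At this point Corollary \ref{c:admiss} already applies verbatim and finishes the proof. So the real content is just to verify that $X$ is singular and reduced so that the hypotheses of the preliminaries (and hence of Theorem \ref{t:sectional}, via Corollary \ref{c:admiss}) are met.

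Concretely, the steps I would carry out are: (i) observe the defining polynomial $f=a_1x_1^{k_1}+\cdots+a_nx_n^{k_n}$ is reduced — it is irreducible when, say, $\gcd$ considerations fail, but in any case its radical is generated by the squarefree part, and one may simply pass to the reduced structure, which by the remark following Corollary \ref{c:planecurve} does not affect the NE problem and does not change the tangent cone since $k_1\ge 2$ forces the initial form to be the non-reduced $a_1x_1^{k_1}$; (ii) note $0\in\Sing X$ because all first partials $\partial f/\partial x_j=a_jk_jx_j^{k_j-1}$ vanish at $0$ (here $k_j\ge k_1>1$ for all $j$, using $k_2\le\cdots\le k_n$, so every exponent exceeds $1$); (iii) identify $C_0(X)$ as the hyperplane $\{x_1=0\}$ taken with multiplicity $k_1\ge 2$, which is an irreducible component of multiplicity $\ge 2$; (iv) invoke Corollary \ref{c:admiss} to conclude that $X$ is non-NE at $0$.

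Alternatively, if one prefers a self-contained argument avoiding Corollary \ref{c:admiss}, I would run Theorem \ref{t:sectional} directly: choose $\pi(x)=(x_1,\dots,x_{n-1})$ — checking it is general since $\ker\pi=\{x_1=\cdots=x_{n-1}=0\}$ is the $x_n$-axis, which meets $C_0(X)=\{x_1=0\}$ only at $0$ because on that axis $x_1=0$ is impossible unless the point is the origin — and then pick the line $\ell=\{x_1=0,\ x_2=\cdots=x_{n-1}=0\}$ suitably perturbed to be general with respect to $\Delta_\pi$; the slice $P\cap X$ is then a curve whose defining equation has initial form a power $x_j^{k_j}$ with exponent $\ge 2$, hence a non-reduced tangent cone, so by Corollary \ref{c:planecurve} it is a non-NE plane curve, and Theorem \ref{t:sectional} gives the claim.

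I expect the main (and only mild) obstacle to be the bookkeeping around genericity: one must make sure that the chosen projection $\pi$ is general with respect to $X$ (which is automatic here because the tangent cone is the hyperplane $\{x_1=0\}$ and one avoids it with a coordinate axis lying inside it — so actually one should take $\ker\pi$ transverse to $\{x_1=0\}$, e.g.\ project off a line not contained in $\{x_1=0\}$) and simultaneously that the line $\ell$ is general with respect to $\Delta_\pi$ in the sense of Definition \ref{d:admissible}; since the set of non-general choices is a proper Zariski-closed subset of the relevant Grassmannians, such a choice exists, and the cleanest route is simply to quote Corollary \ref{c:admiss}, which has already absorbed precisely this genericity argument. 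Everything else is an elementary reading-off of initial forms.
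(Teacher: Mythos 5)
Your main route is exactly the paper's proof: read off the initial form $a_1x_1^{k_1}$, conclude $C_0(X)=\{x_1=0\}$ with multiplicity $k_1\ge 2$, and quote Corollary \ref{c:admiss}. Two small corrections are needed, though. First, Corollary \ref{c:admiss} (proved via Theorem \ref{t:sectional}) is stated under the standing hypothesis $1<k<n$, so it does not apply ``verbatim'' when $n=2$; the paper disposes of that case separately with Corollary \ref{c:planecurve} (non-reduced tangent cone of a plane curve) and you should do the same. Your fussing over reducedness is unnecessary here -- since every $k_j\ge 2$, the gradient vanishes only at $0$, the singularity is isolated and $f$ is automatically reduced -- and in fact your remark that one could ``pass to the reduced structure'' without changing the tangent cone would not be sound in general (the squarefree part of a non-reduced $f$ can have a different initial form), it is simply moot in this situation. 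Second, in your alternative self-contained route the projection $\pi(x)=(x_1,\dots,x_{n-1})$ is \emph{not} general: its kernel, the $x_n$-axis, lies entirely inside $C_0(X)=\{x_1=0\}$ (on that axis $x_1$ vanishes identically, contrary to your claim that they meet only at the origin), which is exactly the failure mode illustrated by Example \ref{ex:nongen}; your subsequent self-correction -- take $\ker\pi$ transverse to $\{x_1=0\}$ -- is the right fix, after which the sliced curve has tangent cone a line of multiplicity $k_1\ge2$ and Corollary \ref{c:planecurve} plus Theorem \ref{t:sectional} conclude as you say.
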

\begin{proof}
We may assume that $n\geq 3$, since the case of plane curves is solved by applying Corollary \ref{c:planecurve}.
The tangent cone $C_0(X)$ is $\{x_1=0\}$ with multiplicity $k_{1}>1$ since the initial form of the defining function is $a_1x_1^{k_1}$, so Corollary \ref{c:admiss} applies.
\end{proof}


\end{document}